\numberwithin{equation}{section}
\newtheorem{Theorem}{Theorem}[section]
\newtheorem{Lemma}{Lemma}[section]
\newtheorem{Corollary}{Corollary}[section]
\theoremstyle{definition}
\newtheorem{Definition}{Definition}[section]
\theoremstyle{remark}
\newtheorem{Remark}{Remark}[section]
\author{A.A. Kon'kov}
\address{Department of Differential Equations,
Faculty of Mechanics and Mathematics,
Mo\-s\-cow Lo\-mo\-no\-sov State University,
Vorobyovy Gory,
Moscow, 119992 Russia}
\email{konkov@mech.math.msu.su}
\author{A.E. Shishkov}
\address{
Center of Nonlinear Problems of Mathematical Physics,
RUDN University,
Miklukho-Maklaya str. 6,
Moscow, 117198 Russia;
Institute of Applied Mathematics and Mechanics of NAS of Ukraine,
Dobrovol'skogo str. 1, Slavyansk, 84116 Ukraine
}
\email{aeshkv@yahoo.com}
\title[On large time behavior]{On large time behavior of solutions of higher order evolution inequalities with fast diffusion}
\keywords{Higher order differential inequalities; Nonlinearity; Large time estimates; Stabilization of solutions}
\subjclass{35B40, 35G20, 35K25, 35K55, 35K65} 
\date{}
\begin{document}

\begin{abstract}
We obtain stabilization conditions and large time estimates for weak solutions of the inequality 
$$
	\sum_{|\alpha| = m}
	\partial^\alpha
	a_\alpha (x, t, u)
	-
	u_t
	\ge
	f (x, t) g (u)
	\quad
	\mbox{in } \Omega \times (0, \infty),
$$
where $\Omega$ is a non-empty open subset of ${\mathbb R}^n$, $m, n \ge 1$, and $a_\alpha$ are Caratheodory functions such that
$$
	|a_\alpha (x, t, \zeta)| 
	\le 
	A \zeta^p,
	\quad
	|\alpha| = m,
$$
with some constants $A > 0$ and $0 < p < 1$ for almost all $(x, t) \in \Omega \times (0, \infty)$ and for all $\zeta \in [0, \infty)$.
For solutions of homogeneous differential inequalities, we give an exact universal upper bound.
\end{abstract}

\maketitle

\section{Introduction}
We study non-negative weak solutions of the inequality
\begin{equation}
	\sum_{|\alpha| = m}
	\partial^\alpha
	a_\alpha (x, t, u)
	-
	u_t
	\ge
	f (x, t) g (u)
	\quad
	\mbox{in } \Omega \times (0, \infty),
	\quad
	\label{1.1}
\end{equation}
where $\Omega$ is a non-empty open subset of ${\mathbb R}^n$, $m, n \ge 1$, and $a_\alpha$ are Caratheodory functions such that
\begin{equation}
	|a_\alpha (x, t, \zeta)| 
	\le 
	A \zeta^p,
	\quad
	|\alpha| = m,
	\label{1.2}
\end{equation}
with some constants $A > 0$ and $0 < p < 1$ 
for almost all 
$(x, t) \in {\mathbb R}_+^{n+1}$
and for all $\zeta \in [0, \infty)$.

As is customary, by ${\mathbb R}_+^{n+1} = {\mathbb R}^n \times (0, \infty)$ 
we denote the upper half-space in ${\mathbb R}^{n+1}$.
In so doing, by $\alpha = {(\alpha_1, \ldots, \alpha_n)}$ 
we mean a multi-index with
$|\alpha| = \alpha_1 + \ldots + \alpha_n$
and
$\partial^\alpha = {\partial^{|\alpha|} / (\partial_{x_1}^{\alpha_1} \ldots \partial_{x_n}^{\alpha_n}})$, where
$x = {(x_1, \ldots, x_n)} \in {\mathbb R}^n$.
Let us also denote by $B_r^x$ the open ball in ${\mathbb R}^n$ of radius $r > 0$ centered at $x$.
In the case of $x = 0$, we write $B_r$ instead of $B_r^0$.
Throughout the paper, it is assumed that $f$ is a non-negative measurable function and
$g \in C ([0, \infty))$ is a non-negative function such that
$g (\zeta) > 0$ for all $\zeta \in (0, \infty)$.

\begin{Definition}\label{D1.1}
A non-negative function $u \in L_{1, loc} (\Omega \times (0, \infty))$ 
is called a weak solution of~\eqref{1.1} if 
$f (x, t) g (u) \in L_{1, loc} (\Omega \times (0, \infty))$ and
\begin{equation}
	\int_0^\infty
	\int_\Omega
	\sum_{|\alpha| = m}
	(- 1)^m
	a_\alpha (x, t, u)
	\partial^\alpha
	\varphi
	\,
	dx
	dt
	+
	\int_0^\infty
	\int_\Omega
	u
	\varphi_t
	\,
	dx
	dt
	\ge
	\int_0^\infty
	\int_\Omega
	f (x, t) g (u)
	\varphi
	\,
	dx
	dt
	\label{1.3}
\end{equation}
for any non-negative function $\varphi \in C_0^\infty (\Omega \times (0, \infty))$.
\end{Definition}

\begin{Definition}\label{D1.2}
A non-negative function $u \in L_{1, loc} (\Omega \times (0, \infty))$ 
is called a weak solution of the equation
\begin{equation}
	\sum_{|\alpha| = m}
	\partial^\alpha
	a_\alpha (x, t, u)
	-
	u_t
	=
	f (x, t) g (u)
	\quad
	\mbox{in } \Omega \times (0, \infty)
	\quad
	\label{1.4}
\end{equation}
if $f (x, t) g (u) \in L_{1, loc} (\Omega \times (0, \infty))$ and
$$
	\int_0^\infty
	\int_\Omega
	\sum_{|\alpha| = m}
	(- 1)^m
	a_\alpha (x, t, u)
	\partial^\alpha
	\varphi
	\,
	dx
	dt
	+
	\int_0^\infty
	\int_\Omega
	u
	\varphi_t
	\,
	dx
	dt
	=
	\int_0^\infty
	\int_\Omega
	f (x, t) g (u)
	\varphi
	\,
	dx
	dt
$$
for any function $\varphi \in C_0^\infty (\Omega \times (0, \infty))$.
\end{Definition}

It can easily be seen that every solution of~\eqref{1.4} is also a solution of~\eqref{1.1}.
A partial case of~\eqref{1.4} is the nonlinear diffusion equation
$$
	\Delta u^p - u_t = f (x, t) g (u).
$$
This equation can be written in the form
\begin{equation}
	u_t 
	- 
	\operatorname{div}
	\left(
		D
		\nabla u
	\right)
	=
	q (x, t),
	\label{1.5}
\end{equation}
where $D = p u^{p - 1}$ is a diffusion coefficient which depends on the density $u$ 
in a power-law manner and $q (x, t) = - f (x, t) g (u)$ is a source-density function.
If $u\ll 1$, then we obviously have $D \ll 1$ for $p > 1$ and $D \gg 1$ for $p < 1$.
In the first case, it is customary to say that~\eqref{1.5} is the slow diffusion equation. 
In the second case,~\eqref{1.5} is the fast diffusion equation.

The  questions treated in the present paper were previously considered mainly for second-order differential operators~[1--18]. 
The case of higher order operators was studied in~\cite{B, KS}.
In so doing, paper~\cite{B} deals with solutions of higher order equations of the $p$-Laplacian type satisfying some initial condition and zero Dirichlet boundary conditions on $\partial \Omega \times (0, \infty)$. 
In~\cite{KS}, the case of slow diffusion was studied.
In our paper, we investigate the fast diffusion case.
We obtain stabilization conditions and estimates at infinity for weak solutions of~\eqref{1.1} in $L_1$-norm. In so doing, no ellipticity conditions on the coefficients $a_\alpha$ of the differential operator are imposed. 
We also impose no initial or boundary conditions on solutions of~\eqref{1.1}.
Thus, our results can be applied to a wide class of differential inequalities. 
For solutions of homogeneous differential inequalities we obtain sharp universal upper bound 
(see Corollary~\ref{C2.1} and Remark~\ref{R2.2}).

\section{Main results}

\begin{Theorem}\label{T2.1}
Suppose that
\begin{equation}
	\liminf_{\zeta  \to \infty} 
	\frac{
		g (\zeta)
	}{
		\zeta
	}
	>
	0
	\label{T2.1.1}
\end{equation}
and 
\begin{equation}
	\lim_{t \to \infty}
	\mathop{\rm ess\,inf}_{
		\Omega \times (t, \infty)
	}
	f 
	=
	\infty.
	\label{T2.1.2}
\end{equation}
Then any non-negative weak solution of~\eqref{1.1} stabilizes to zero as $t \to \infty$ 
in the $L_1$ norm on an arbitrary compact set $K \subset \Omega$, i.e.
\begin{equation}
	\limsup_{t \to \infty}
	\| u (\cdot, t) \|_{
		L_1 (K)
	}
	=
	0.
	\label{T2.1.3}
\end{equation}
\end{Theorem}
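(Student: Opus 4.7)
The plan is to carry out a nonlinear-capacity test-function argument tailored to the fast-diffusion structure $|a_\alpha|\le Au^p$, $p<1$: substitute a nonnegative cutoff $\varphi=\psi(x)^s\chi(t)^s\in C_0^\infty(\Omega\times(0,\infty))$ into the weak form \eqref{1.3} and use Young's inequality, together with the divergence of $\essinf_{\Omega\times(T,\infty)}f$ from \eqref{T2.1.2}, to drive all residual terms to zero as $T\to\infty$.

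First I fix a compact $K\subset\Omega$. Hypothesis \eqref{T2.1.1} combined with continuity and positivity of $g$ on $(0,\infty)$ guarantees that for every $\zeta_0>0$ there is a constant $c(\zeta_0)>0$ with $g(\zeta)\ge c(\zeta_0)\zeta$ for all $\zeta\ge\zeta_0$. I pick $\psi\in C_0^\infty(\Omega)$ with $0\le\psi\le 1$ and $\psi\equiv 1$ on $K$; for each large $T$ I choose $\chi\in C_0^\infty((T-1,T+2))$ with $\chi\equiv 1$ on $[T,T+1]$, $0\le\chi\le 1$, $|\chi'|\le 2$; and set $\varphi=\psi^s\chi^s$ with integer $s\ge m/(1-p)$. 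Substituting $\varphi$ into \eqref{1.3} and using $|a_\alpha|\le Au^p$ together with $g(\zeta)\ge c(\zeta_0)\zeta$ on $\{u\ge\zeta_0\}$ yields
\[
c(\zeta_0)\int_{\{u\ge\zeta_0\}}fu\varphi\,dx\,dt\le A\sum_{|\alpha|=m}\int u^p|\partial^\alpha\varphi|\,dx\,dt+\int u|\varphi_t|\,dx\,dt.
\]

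Next I split the spatial term over $\{u\ge\zeta_0\}$ and $\{u<\zeta_0\}$. On the former I apply Young's inequality with exponents $1/p,\,1/(1-p)$,
\[
Au^p|\partial^\alpha\varphi|\le\varepsilon fu\varphi+C_\varepsilon\,\frac{|\partial^\alpha\varphi|^{1/(1-p)}}{(f\varphi)^{p/(1-p)}},
\]
pick $\varepsilon$ small relative to $c(\zeta_0)$ and absorb $\varepsilon\int_{\{u\ge\zeta_0\}}fu\varphi$ on the left; on the latter I bound $u^p\le\zeta_0^p$, which contributes $O(\zeta_0^p)$ times $\sum_{|\alpha|=m}\int|\partial^\alpha\varphi|$. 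The choice $s\ge m/(1-p)$ and $f\ge M(T):=\essinf_{\Omega\times(T-1,\infty)}f$ on $\supp\varphi$ make the Young residual $O(M(T)^{-p/(1-p)})\to 0$ by \eqref{T2.1.2}. Dividing by $M(T)$ (so that $\int_{\{u\ge\zeta_0\}}u\varphi\le M(T)^{-1}\int_{\{u\ge\zeta_0\}}fu\varphi$) and adding $\int_{\{u<\zeta_0\}}u\varphi\le\zeta_0|\supp\varphi|$ gives
\[
\int_T^{T+1}\!\!\int_K u\,dx\,dt\le\zeta_0\cdot\mathrm{const}+o(1)\quad\text{as }T\to\infty.
\]
Because $\zeta_0>0$ is arbitrary (with $T$ chosen large depending on $\zeta_0$), the limit superior as $T\to\infty$ is zero; replaying the argument with $T$ shifted to any starting time $T_1$ delivers \eqref{T2.1.3}.

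The principal difficulty will be the linear-in-$u$ term $\int u|\varphi_t|$: unlike the fast-diffusion spatial term it admits no Young-type self-improvement, so I must exploit simultaneously the compact support of $\varphi_t$ in the two transition zones of $\chi$ (where $|\varphi_t|\le C_s$), the pointwise bound $u\le fu/M(T)$ afforded by \eqref{T2.1.2}, and possibly a preliminary integrability estimate for $u$ at infinity obtained by running the same test-function machinery with a one-sided temporal cutoff. A subsidiary subtlety is that $c(\zeta_0)$ and $C_\varepsilon$ may degenerate as $\zeta_0\downarrow 0$, so at each stage of the iteration in $\zeta_0$ I must choose $T$ large enough that $M(T)$ dominates these degenerating constants.
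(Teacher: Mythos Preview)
Your capacity argument handles the spatial derivatives cleanly, but the scheme does not close because of the time-derivative term~$\int u|\varphi_t|$, a difficulty you flag yourself but do not actually resolve. With a compactly supported cutoff $\chi$ one has $|\varphi_t|=s|\chi'|\chi^{s-1}\psi^s$ supported where $\chi<1$; on that set $|\varphi_t|$ is \emph{not} dominated by $f\varphi=f\psi^s\chi^s$ (the factor $\chi^{-1}$ blows up), so an absorption $\int_{\{u\ge\zeta_0\}}u|\varphi_t|\le\eta\int fu\varphi$ is impossible for small $\eta$. Your proposed remedy $u\le fu/M(T)$ merely converts $\int u|\varphi_t|$ into $M(T)^{-1}\int_{\{u\ge\zeta_0\}} fg(u)\,\psi^s$ over the transition slabs, a quantity on which you have no a~priori bound that is uniform in $T$; and a one-sided cutoff with $\chi\equiv1$ for $t\ge T$ is not admissible in~\eqref{1.3}, so any compact approximation reintroduces an uncontrolled boundary term at the far end. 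As written the argument is circular: estimating $\int u$ on one time-slab requires control of $\int u$ on the adjacent slab, which is precisely the conclusion. A secondary point: even if the scheme closed, it delivers only the averaged bound $\int_T^{T+1}\|u(\cdot,t)\|_{L_1(K)}\,dt\to0$, whereas~\eqref{T2.1.3} asks for the essential $\limsup$ of $\|u(\cdot,t)\|_{L_1(K)}$ itself.

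The paper circumvents this with a device that is absent from your proposal: instead of a compact cutoff in $t$, it tests with an \emph{exponential weight} $e^{\lambda t}$, combined with a Steklov mollifier $\omega_h(\tau-t)$ to extract genuine traces at Lebesgue times $t_1,t_2$. Differentiating $e^{\lambda t}$ puts $+\lambda\int u\,e^{\lambda t}\varphi_0$ on the left of~\eqref{1.3}; choosing $\lambda$ so that $f(x,t)g(\zeta)\ge 2\lambda\zeta$ for all $\zeta\ge\varepsilon$ (possible for every pair $(\lambda,\varepsilon)$ once $t_1$ is large enough, by~\eqref{T2.1.1}--\eqref{T2.1.2}) makes this term absorbable by the right-hand side on $\{u\ge\varepsilon\}$. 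The resulting energy inequality (Lemma~\ref{L3.1}) is then iterated over a dyadic family of spatial radii (Lemma~\ref{L3.2}) to produce
\[
\int_{B_R^y}u(\cdot,t_2)\,dx\le e^{-\lambda(t_2-t_1)}\int_{B_{2R}^y}u(\cdot,t_1)\,dx+C\Bigl(\varepsilon R^n+\frac{\varepsilon^pR^{n-m}}{\lambda}+\frac{R^{\,n-m/(1-p)}}{\lambda^{1/(1-p)}}\Bigr),
\]
valid for a.e.\ $t_2>t_1$; letting $t_2\to\infty$, then $\varepsilon\to0$ and $\lambda\to\infty$, gives~\eqref{T2.1.3}. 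The exponential weight is the missing idea: it turns the time-derivative term from an obstruction into a contribution that is exactly \emph{matched} by the diverging absorption coefficient.
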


\begin{Theorem}\label{T2.2}
Suppose that~\eqref{T2.1.1} is valid and
$$
	\lim_{t \to \infty}
	\mathop{\rm ess\,inf}_{
		\Omega \times (t, \infty)
	}
	f 
	\ge
	\gamma,
$$
where $\gamma > 0$ is a real number.
Then
\begin{equation}
	\limsup_{t \to \infty}
	\| u (\cdot, t) \|_{
		L_1 (K)
	}
	\le
	C,
	\label{T2.2.3}
\end{equation}
for any non-negative weak solution of~\eqref{1.1} and compact set $K \subset \Omega$,
where the constant $C > 0$ depends only on $A$, $\gamma$, $m$, $n$, $p$, $K$, $\Omega$, and the function $g$.
\end{Theorem}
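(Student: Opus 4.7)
The plan is to test~\eqref{1.3} against $\varphi(x,t) = \psi(x)\chi(t)$ where $\psi(x) = \eta(x)^\lambda$, with $\eta \in C_0^\infty(\Omega)$ satisfying $0 \le \eta \le 1$ and $\eta \equiv 1$ on a neighborhood of $K$, and with $\lambda$ chosen so that $\lambda(1-p) \ge m$. The role of the high power $\lambda$ is that $|\partial^\alpha \psi| \le C \eta^{\lambda - m}$ for every $|\alpha| = m$, and then, since $\eta \le 1$ and $(1-p)\lambda - m \ge 0$, we obtain the pointwise reabsorption estimate
\[
u^p|\partial^\alpha \psi| \le C\,(u\psi)^p \le \varepsilon\, u\psi + C_{\varepsilon}\,\mathbf{1}_{\supp \eta}
\]
for any $\varepsilon > 0$, via Young's inequality with exponents $1/p$ and $1/(1-p)$. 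This is the crucial structural estimate that exploits the fast-diffusion condition $p<1$.

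Next, set $F(r) := \int_\Omega u(x,r)\psi(x)\,dx$ and approximate $\mathbf{1}_{[s,t]}$ in the time variable by a family $\chi_k \in C_0^\infty((0,\infty))$ with $\chi_k \to \mathbf{1}_{[s,t]}$ in $L^1$ and $\chi_k' \to \delta_s - \delta_t$. Substituting $\varphi = \psi\chi_k$ into~\eqref{1.3} and passing to the limit, at Lebesgue points $s \le t$ of $F$ we get
\[
F(t) - F(s) + \int_s^t\!\!\int_\Omega f\,g(u)\psi\,dx\,dr \le A\sum_{|\alpha|=m}\int_s^t\!\!\int_\Omega u^p|\partial^\alpha \psi|\,dx\,dr.
\]
Now invoke the hypotheses. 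From~\eqref{T2.1.1} and $g \ge 0$ one extracts constants $c_0>0$ and $\zeta_0 \ge 0$ with $g(\zeta) \ge c_0(\zeta - \zeta_0)$ for all $\zeta \ge 0$; and the assumption on $f$ furnishes $T_0 > 0$ such that $f \ge \gamma/2$ a.e.\ on $\Omega\times(T_0,\infty)$. Combining the resulting lower bound $fg(u)\psi \ge (\gamma c_0/2)(u\psi - \zeta_0\psi)$ with the Young estimate above and choosing $\varepsilon$ small enough to absorb the $u\psi$ contribution into the left, we deduce for a.e.\ $T_0 \le s \le t$
\[
F(t) + c_1 \int_s^t F(r)\,dr \le F(s) + C_2(t-s),
\]
where $c_1 > 0$ and $C_2$ depend only on $A,\gamma,m,n,p,K,\Omega,g$ through the bounds that were used.

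This is the integral form of $F' + c_1 F \le C_2$. A standard Gronwall argument, applied to $J(t) := \int_s^t F$, gives $F(t) \le F(s)e^{-c_1(t-s)} + C_2/c_1$. Selecting $s$ as a Lebesgue point of $F$ with $F(s)<\infty$ and letting $t \to \infty$ yields $\limsup_{t\to\infty} F(t) \le C_2/c_1$. Since $\psi \equiv 1$ on $K$ gives $\|u(\cdot,t)\|_{L_1(K)} \le F(t)$, the estimate~\eqref{T2.2.3} follows with $C = C_2/c_1$.

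The main technical obstacle is the limiting passage $\chi_k \to \mathbf{1}_{[s,t]}$ that converts $\int_0^\infty F(r)\chi_k'(r)\,dr$ into $F(s)-F(t)$ when $u$ is merely of class $L^1_{\mathrm{loc}}$; this is handled by the Lebesgue differentiation theorem applied to the scalar function $r\mapsto F(r)$, which identifies the limits at almost every pair of endpoints. Once this integrated inequality is in hand, the selection of $\lambda$, the Young reabsorption, and the Gronwall conclusion proceed by routine computation.
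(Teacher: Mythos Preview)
Your argument is correct, and it takes a genuinely different route from the paper's proof.

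The central technical issue in both proofs is how to control the contribution
\[
\frac{1}{(R_2-R_1)^m}\int e^{\lambda t}u^p\,dx\,dt
\quad\text{(paper)}\qquad\text{or}\qquad \int u^p\,|\partial^\alpha\psi|\,dx\,dt\quad\text{(yours)}
\]
coming from the diffusion after using~\eqref{1.2}. The paper handles this by a fairly elaborate mechanism: Lemma~3.1 gives a weighted energy inequality on concentric balls with an $e^{\lambda t}$ time weight, and Lemma~3.2 then runs a dyadic iteration on a finite chain of radii $r_1<r_2<\dots<r_l$ in $[3R/2,2R]$, tracking the growth of $J(r)=\int e^{\lambda t}u^p$, to squeeze out the bound on $\int u^p$ over the annulus. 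This iteration is what produces the explicit $R^{\,n-m/(1-p)}\lambda^{-1/(1-p)}$ term in Lemma~3.2 and hence the final constant.

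Your approach bypasses the iteration entirely. By taking $\psi=\eta^\lambda$ with $\lambda\in\mathbb N$, $\lambda(1-p)\ge m$, you obtain $|\partial^\alpha\psi|\le C\eta^{\lambda-m}\le C\eta^{\lambda p}$, whence $u^p|\partial^\alpha\psi|\le C(u\psi)^p$, and a single application of Young's inequality with exponents $1/p$ and $1/(1-p)$ absorbs this into $\varepsilon\,u\psi+C_\varepsilon$. After that, the distributional inequality $F'+c_1F\le C_2$ on $(T_0,\infty)$ follows directly, and the Gronwall step finishes. This is substantially more elementary than the paper's Lemmas~3.1--3.2, and still delivers a constant with the required dependence on $A,\gamma,m,n,p,K,\Omega,g$; in particular, Corollary~2.1 (which only uses Theorem~2.2 as a black box together with scaling) goes through unchanged. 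What the paper's approach buys, in exchange for its extra work, is an explicit decomposition of the bound in Lemma~3.2 into the three pieces $\varepsilon R^n$, $\varepsilon^pR^{n-m}/\lambda$, and $R^{\,n-m/(1-p)}/\lambda^{1/(1-p)}$, which makes the structure of the constant transparent.

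Two small points worth tightening in your write-up: (i) take $\lambda$ to be an \emph{integer} with $\lambda\ge m/(1-p)$ so that $\eta^\lambda\in C_0^\infty(\Omega)$; for non-integer $\lambda$ the function $\eta^\lambda$ need not be $C^m$ at $\partial\{\eta>0\}$. (ii) The Gronwall step is cleanest if stated in distributional form: keeping $\chi\in C_0^\infty((T_0,\infty))$ arbitrary (rather than passing to $\mathbf 1_{[s,t]}$) yields $(e^{c_1t}F)'\le C_2e^{c_1t}$ in $\mathcal D'((T_0,\infty))$, so $e^{c_1t}F(t)-(C_2/c_1)e^{c_1t}$ coincides a.e.\ with a nonincreasing function, and the conclusion $F(t)\le F(s_0)e^{-c_1(t-s_0)}+C_2/c_1$ for a.e.\ $t>s_0$ is immediate. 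Your integrated-inequality formulation is equivalent, but the monotonicity phrasing avoids the appearance of circularity in the ``standard Gronwall'' invocation for a merely $L^1_{\mathrm{loc}}$ function.
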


The proof of Theorems~\ref{T2.1} and~\ref{T2.2} is given in Section~\ref{proof}.

\begin{Remark}\label{R2.1}
Since $u \in L_{1, loc} ({\mathbb R}_+^{n+1})$, the norm on the left in~\eqref{T2.1.3} and~\eqref{T2.2.3}
is defined for almost all $t \in (0, \infty)$;
therefore, the limit in~\eqref{T2.1.3} and~\eqref{T2.2.3} is understood in the essential sense.
In other words, 
$$
	\limsup_{t \to \infty}
	\| u (\cdot, t) \|_{
		L_1 (K)
	}
	=
	\inf \Lambda,
$$
where $\Lambda$ is the set of real numbers $\lambda$ such that
$$
	\| u (\cdot, t) \|_{
		L_1 (K)
	}
	<
	\lambda
$$
for almost all $t$ in a neighborhood of infinity. 
In the case of $\Lambda = \emptyset$, we write 
$$
	\limsup_{t \to \infty}
	\| u (\cdot, t) \|_{
		L_1 (K)
	}
	=
	\infty.
$$
\end{Remark}

\begin{Corollary}\label{C2.1}
Suppose that $u$ is a non-negative weak solution of the inequality
\begin{equation}
	\sum_{|\alpha| = m}
	\partial^\alpha
	a_\alpha (x, t, u)
	-
	u_t
	\ge
	0
	\quad
	\mbox{in } B_{2 R}^y \times (0, \infty),
	\label{C2.1.1}
\end{equation}
where $y \in {\mathbb R}^n$ and $R > 0$ is a real number.
Then
\begin{equation}
	\| u (\cdot, t) \|_{
		L_1 (B_R^y)
	}
	\le
	C
	R^{
		n - m / (1 - p)
	}
	t^{1 / (1 - p)}
	\label{C2.1.2}
\end{equation}
for almost all $t$ in a neighborhood of infinity, 
where the constant $C > 0$ depends only on $A$, $m$, $n$, and $p$.
\end{Corollary}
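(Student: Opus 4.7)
The corollary follows by a direct localization argument rather than by invoking Theorems~\ref{T2.1}--\ref{T2.2}: I would plug a carefully chosen cutoff into the weak form of \eqref{C2.1.1}, derive a nonlinear Gronwall inequality for the weighted local mass $\phi(t) := \int u(x,t)\,\psi(x)^k\,dx$, and compare it with the critical self-similar profile $t^{1/(1-p)}$.

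Specifically, I first fix $\psi \in C_0^\infty(B_{2R}^y)$ with $\psi \equiv 1$ on $B_R^y$, $0\le\psi\le 1$, $|\partial^\beta\psi|\le c_\beta R^{-|\beta|}$, and an integer $k\ge m/(1-p)$. For almost every $0<t_1<t_2$ (chosen as Lebesgue points of $\phi$), I test \eqref{1.3} (with $f \equiv 0$) against $\varphi = \psi^k \eta_\delta$, where $\eta_\delta \in C_0^\infty(0,\infty)$ approximates the characteristic function of $(t_1,t_2)$, and let $\delta \to 0$. This yields
\[
\phi(t_2) \le \phi(t_1) + \int_{t_1}^{t_2}\!\!\int \sum_{|\alpha|=m} (-1)^m a_\alpha(x,t,u)\,\partial^\alpha\psi^k\,dx\,dt.
\]

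Using the Leibniz estimate $|\partial^\alpha\psi^k|\le c\,\psi^{k-m}R^{-m}$ (valid since $k\ge m$), the structural bound $|a_\alpha|\le Au^p$, and H\"older's inequality with exponents $1/p$, $1/(1-p)$ combined with $\psi^{(k-m)/p}\le\psi^k$ (which is exactly the condition $k\ge m/(1-p)$), I estimate the double integral above by $K\int_{t_1}^{t_2}\phi^p\,ds$, where
\[
K = c_0\, A\, R^{n(1-p)-m}, \qquad c_0 = c_0(m,n,p).
\]
The function $F(t) := \phi(t_1)+K\int_{t_1}^t\phi^p\,ds$ majorizes $\phi$ and satisfies $F'=K\phi^p\le KF^p$, so separating variables and integrating gives
\[
\phi(t) \le \bigl(\phi(t_1)^{1-p}+K(1-p)(t-t_1)\bigr)^{1/(1-p)}.
\]
For $t$ sufficiently large (compared to $\phi(t_1)^{1-p}/K$), the linear term dominates, so $\phi(t)\le (2K(1-p))^{1/(1-p)} t^{1/(1-p)} = C R^{n-m/(1-p)} t^{1/(1-p)}$, using $(n(1-p)-m)/(1-p)=n-m/(1-p)$. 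Since $\psi\equiv 1$ on $B_R^y$, this is exactly \eqref{C2.1.2}.

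The main technical obstacle is the justification of the $\delta\to 0$ limit: it requires $\phi$ to have well-defined left/right traces at $t_1,t_2$ (handled by passing to Lebesgue points) and $a_\alpha(x,t,u)\in L_{1,loc}$ on a neighbourhood of the support of $\psi^k$, which is controlled via $|a_\alpha|\le Au^p$ and H\"older from $u\in L_{1,loc}$. The constraint $k\ge m/(1-p)$, forced by the absorption $\psi^{(k-m)/p}\le\psi^k$, is precisely what produces the sharp exponent $n-m/(1-p)$ of $R$.
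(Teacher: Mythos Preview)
Your argument is correct and takes a genuinely different route from the paper. The paper derives Corollary~\ref{C2.1} from Theorem~\ref{T2.2}: the change of variables $\tilde u(x,t)=e^{-t/(1-p)}R^{m/(1-p)}u(Rx+y,e^t)$ turns~\eqref{C2.1.1} into an inequality on $B_2\times(0,\infty)$ with the linear absorption term $\tilde u/(1-p)$ on the right, to which Theorem~\ref{T2.2} applies directly; that theorem, however, rests on the fairly elaborate machinery of Lemmas~\ref{L3.1}--\ref{L3.2} (weighted energy inequalities plus a dyadic radii iteration). Your approach is considerably more elementary and self-contained: a single high-power cutoff $\psi^k$ with $k\ge m/(1-p)$, the standard Leibniz--H\"older absorption that this choice of exponent enables, and a sublinear Gronwall comparison. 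It makes the self-similar exponents $(n-m/(1-p),\,1/(1-p))$ emerge directly from the structural constant $K=c_0AR^{n(1-p)-m}$ without any change of variables. The technical points you flag (Lebesgue-point traces for $\phi$, local integrability of $a_\alpha(\cdot,\cdot,u)$ via $u\in L_{1,loc}$ and H\"older) are handled in the paper by exactly the same devices, namely Steklov averaging and the sets $T(M)$. What the paper's route buys is structural economy: once Theorem~\ref{T2.2} is available for its own sake, the corollary follows in a few lines, and the exponential time substitution explains conceptually why the homogeneous problem produces the critical rate $t^{1/(1-p)}$.
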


\begin{proof}
Putting
$$
	\tilde u (x, t) 
	= 
	e^{- t / (1 - p)} 
	R^{
		m / (1 - p)
	}
	u (R x + y, e^t),
$$
we obtain a weak solution of the inequality
$$
	\sum_{|\alpha| = m}
	\partial^\alpha
	\tilde a_\alpha (x, t, \tilde u)
	-
	\tilde u_t
	\ge
	\frac{1}{1 - p}
	\tilde u
	\quad
	\mbox{in } B_2 \times (0, \infty),
$$
where
$$
	\tilde a_\alpha (x, t, \zeta)
	=
	e^{- p t / (1 - p)}
	R^{p m / (1 - p)}
	a_\alpha 
	(
		R x + y, 
		e^t, 
		e^{t / (1 - p)} 
		R^{
			- m / (1 - p)
		}
		\zeta
	),
	\quad
	|\alpha| = m.
$$
From~\eqref{1.2}, it follows that
$$
	|\tilde a_\alpha (x, t, \zeta)| 
	\le 
	A \zeta^p,
	\quad
	|\alpha| = m,
$$
for almost all $(x, t) \in {\mathbb R}_+^{n+1}$ and for all $\zeta \in [0, \infty)$.
Thus, by Theorem~\ref{T2.2}, we have
$$
	\| \tilde u (\cdot, t) \|_{
		L_1 (B_1)
	}
	\le
	C
$$
for almost all $t$ in a neighborhood of infinity, 
where the constant $C > 0$ depends only on $A$, $m$, $n$, and $p$.
This in turn implies~\eqref{C2.1.2}.
\end{proof}

\begin{Remark}\label{R2.2}
Inequality~\eqref{C2.1.2} is the best possible universal upper estimate for weak solutions
of~\eqref{C2.1.1}. Indeed, if
$$
	0 
	< 
	p 
	< 
	p^*
	=
	1 - \frac{m}{n},
$$
where $m$ is an even integer, then
there exists a constant $u_0 > 0$ depending only on $m$, $n$, and $p$ such that 
$$
	u (x, t)
	=
	u_0
	|x|^{- m / (1 - p)}
	t^{1 / (1 - p)}
$$
is a weak solution of the backward fast diffusion equation
$$
	(- \Delta)^{m / 2} u^p 
	- 
	u_t 
	=
	0
	\quad
	\mbox{in } {\mathbb R}_+^{n+1}.
$$
In so doing, it is easy to verify that
$$
	\| u (\cdot, t) \|_{
		L_1 (B_R)
	}
	=
	C
	R^{
		n - m / (1 - p)
	}
	t^{1 / (1 - p)}
$$
for all $R > 0$ and $t > 0$, 
where the constant $C > 0$ depends only on $m$, $n$, and $p$.

For $m = 2$ and $n > 2$, the critical exponent
$$
	p_* = 1 - \frac{2}{n}
$$
plays an important role in the study of solutions of the fast diffusion equation
$$
	\Delta u^p - u_t = 0
	\quad
	\mbox{in } {\mathbb R}_+^{n+1}.
$$
The fundamental solution of this equation with an arbitrary initial mass
$$
	u (x, 0) = \kappa \delta (x),
	\quad
	\kappa > 0,
$$
where $\delta (x)$ is the Dirac measure, exists if and only if $p^* < p < 1$.
This solution is known as the Barenblatt--Zel'dovich--Kompaneets solution
$$
	u_\kappa (x, t)
	=
	t^{-l}
	\left(
		C_\kappa 
		+
		\frac{
			(1 - p) l
		}{
			2 p n
		}
		\frac{
			|x|^2
		}{
			t^{2 l / n}
		}
	\right)^{-1 / (1 - p)},
$$
where
$$
	l = \frac{n}{2 - n (1 - p)} > 0
	\quad
	\mbox{and}
	\quad
	C_\kappa
	=
	a (n, p)
	\kappa^{- 2 (1 - p) l / n},
	\quad
	a (n, p) > 0.
$$
It can be seen that $C_\kappa \to 0$ as $\kappa \to \infty$ and the limit function
$$
	u_\infty (x, t)
	:=
	\lim_{\kappa \to \infty}
	u_\kappa (x, t)
	=
	C_*
	\left(
		\frac{t}{|x|^2}
	\right)^{1 / (1 - p)},
	\quad
	C_*
	=
	\left(
		\frac{
			2 p (2 - n + n p)
		}{
			1 - p
		}
	\right)^{1 / (1 - p)},
$$
satisfies the equation
$$
	\Delta u_\infty^p - \partial_t u_\infty = 0
	\quad
	\mbox{in } {\mathbb R}_+^{n+1} \setminus S,
$$
where
$
	S
	=
	\{
		(x, t) \in {\mathbb R}_+^{n+1}
		:
		x = 0
	\}.
$
In particular, 
$$
	\Delta u_\infty^p - \partial_t u_\infty = 0
	\quad
	\mbox{in } B_{2 R}^y \times (0, \infty),
$$
for any $y \in {\mathbb R}^n \setminus \{ 0 \}$, where $R = |y| / 2$.
In so doing, by direct calculations, it can be shown that
$$
	\| u_\infty (\cdot, t) \|_{
		L_1 (B_R^y)
	}
	=
	C
	R^{
		n - 2 / (1 - p)
	}
	t^{1 / (1 - p)}
$$
for all $t > 0$, where the constant $C > 0$ depends only on $n$ and $p$.
Thus, estimate~\eqref{C2.1.2} is the best possible.

The function $u_\infty$ has a strong singularity at the axis $S$
and is called a razor blade solution~\cite{CV, VV}.
Razor blade solutions can appear for equations of different structure, for example, as solutions of elliptic and parabolic equations with slowly and fast diffusion and of equations with nonlinear degenerate absorption~\cite{MS, SVRLMA}.

We note that Corollary~\ref{C2.1} guarantees the validity of estimate~\eqref{C2.1.2} 
only for $t$ from a neighborhood of infinity which, generally speaking, depends on $R$ and~$u$.
This limitation can not be relaxed. 
Indeed, in the case of $p_* < p < 1$, 
the Barenblatt--Zel'dovich--Kompaneets solutions $u_\kappa$ satisfy the equation
$$
	\Delta u_\kappa^p - \partial_t u_\kappa = 0
	\quad
	\mbox{in }
	{\mathbb R}_+^{n+1}.
$$
Hence, by Corollary~\ref{C2.1}, for any $\kappa > 0$ and $R > 0$ there exists $t_* > 0$ 
such that
$$
	\| u_\kappa (\cdot, t) \|_{
		L_1 (B_R)
	}
	\le
	C
	R^{
		n - 2 / (1 - p)
	}
	t^{1 / (1 - p)}
$$
for almost all $t > t_*$, where the constant $C > 0$ depends only on $n$ and $p$.
Since
$$
	\lim_{R \to \infty}
	\| u_\kappa (\cdot, t) \|_{
		L_1 (B_R)
	}
	=
	\kappa
$$
for all $\kappa > 0$ and $t > 0$ and, moreover,
$$
	\lim_{R \to \infty}
	R^{
		n - 2 / (1 - p)
	}
	=
	0,
$$
the real number $t_*$ can not be the same for all $R > 0$.
\end{Remark}

\section{Proof of Theorems~\ref{T2.1} and~\ref{T2.2}}\label{proof}

In this section, it is assumed that $u$ is a non-negative solution of~\eqref{1.1}.
We take the Steklov-Schwartz averaging kernel
$$
	\omega_h (t) 
	= 
	\frac{1}{h} 
	\omega
	\left(
		\frac{t}{h}
	\right),
	\quad
	h > 0,
$$
where $\omega \in C^\infty ({\mathbb R})$ is a non-negative function such that
$\operatorname{supp} \omega \subset (-1, 1)$ and
$$
	\int_{-\infty}^\infty
	\omega
	\,
	d t
	=
	1.
$$
For any measurable set $M$ with compact closure belonging to $\Omega$ 
we denote by $T (M)$ the set of $\tau \in (0, \infty)$
such that $u (\cdot, \tau) \in L_1 (M)$ and
$$
	\int_{M \times (0, \infty)}
	\omega_h (\tau - t)
	u (x, t)
	\,
	dx dt
	=
	\int_0^\infty
	\omega_h (\tau - t)
	\int_M
	u (x, t)
	\,
	dx dt
	\to
	\int_M
	u (x, \tau)
	\,
	dx
	\quad
	\mbox{as } h \to +0.
$$
Since $u \in L_{1, loc} (\Omega \times (0, \infty))$, the Lebesgue measure of 
the difference $(0, \infty) \setminus T (M)$ is equal to zero.

\begin{Lemma}\label{L3.1}
Let $0 < R \le R_1 < R_2 \le 2 R$ and $t_1 < t_2$ be real numbers such that
$\overline{B_{2 R}^y} \subset \Omega$,
$
	t_1 
	\in 
	T 
	(
		B_{2 R}^y
	),
$
and
$
	t_2 
	\in 
	T 
	(
		B_R^y
	)
$
for some $y \in \Omega$. 
Then
\begin{align*}
	&
	\int_{
		B_{2 R}^y
	}
	e^{\lambda t_1}
	u (x, t_1)
	\,
	dx
	-
	\int_{
		B_R^y
	}
	e^{\lambda t_2}
	u (x, t_2)
	\,
	dx
	+
	\lambda
	\int_{
		B_{R_2}^y \times (t_1, t_2)
		\setminus
		E
	}
	e^{\lambda t}
	u
	\,
	dx dt
	\\
	&
	\qquad
	{}
	+
	\frac{
		C
	}{
		(R_2 - R_1)^m
	}
	\int_{
		(B_{R_2}^y	\setminus B_{R_1}^y) \times (t_1, t_2)
	}
	e^{\lambda t}
	u^p
	\,
	dx	dt
	\ge
	\frac{1}{2}
	\int_{
		B_{R_1}^y \times (t_1, t_2)
		\cap
		E
	}
	e^{\lambda t}
	f (x, t) g (u)
	\,
	dx dt
\end{align*}
for any measurable set $E \subset B_{2 R}^y \times (t_1, t_2)$ and real number $\lambda > 0$ 
such that
\begin{equation}
	f (x, t) g (u)
	\ge
	2
	\lambda u 
	\quad
	\mbox{on } E,
	\label{L3.1.2}
\end{equation}
where the constant $C > 0$ depends only on $A$, $m$, $n$, and $p$.
\end{Lemma}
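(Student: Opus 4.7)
My plan is to test the weak formulation~\eqref{1.3} against a product test function $\varphi_h(x,t) = \psi(x) e^{\lambda t} \theta_h(t)$, where $\psi$ is a spatial cutoff vanishing outside $B_{R_2}^y$ and $\theta_h$ is a smoothed indicator of $(t_1, t_2)$, and then pass to the limit $h \to 0$ and split the resulting integrals according to whether $(x,t) \in E$.

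I take $\psi \in C_0^\infty(B_{R_2}^y)$ with $0 \le \psi \le 1$, $\psi \equiv 1$ on $B_{R_1}^y$, and $|\partial^\alpha \psi| \le C(R_2 - R_1)^{-m}$ for $|\alpha| = m$; the $m$-th derivatives are then supported in the annulus $B_{R_2}^y \setminus B_{R_1}^y$, which is what will eventually produce the $u^p$-integral in the conclusion. For the time cutoff I set
\[
\theta_h(t) = \int_{-\infty}^t \bigl(\omega_h(s-t_1) - \omega_h(s-t_2)\bigr)\,ds,
\]
a non-negative $C^\infty$ approximation of $\mathbf{1}_{(t_1,t_2)}$ supported in $(t_1-h, t_2+h)$ with derivative $\theta_h'(t) = \omega_h(t-t_1) - \omega_h(t-t_2)$. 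For all small enough $h > 0$, $\varphi_h$ is an admissible non-negative test function in~\eqref{1.3}.

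Using $\partial^\alpha \varphi_h = e^{\lambda t}\theta_h\,\partial^\alpha\psi$ and $\partial_t \varphi_h = \psi e^{\lambda t}(\lambda \theta_h + \theta_h')$ in~\eqref{1.3}: the $a_\alpha$-term is supported in the annulus and, by~\eqref{1.2}, is bounded in absolute value by $\frac{C}{(R_2-R_1)^m}\int_{(B_{R_2}^y \setminus B_{R_1}^y) \times (t_1,t_2)} u^p e^{\lambda t}\,dxdt$; the $\lambda \theta_h$-piece yields $\lambda \int u \psi e^{\lambda t}\,dxdt$; and the $\theta_h'$-piece yields, in the limit $h \to 0$, the boundary contributions $e^{\lambda t_1}\!\int\! \psi(x) u(x, t_1)\,dx - e^{\lambda t_2}\!\int\! \psi(x) u(x, t_2)\,dx$. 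I then use $\psi \le 1$ with $\supp \psi \subset B_{2R}^y$ to dominate $\int\psi u(\cdot, t_1)$ by $\int_{B_{2R}^y} u(\cdot, t_1)$, and $\psi \equiv 1$ on $B_R^y \subset B_{R_1}^y$ to dominate $-\int\psi u(\cdot, t_2)$ by $-\int_{B_R^y} u(\cdot, t_2)$.

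To finish, I split $\int(fg(u) - \lambda u) \psi e^{\lambda t}\,dxdt$ over $B_{R_2}^y \times (t_1, t_2)$ according to whether $(x,t) \in E$. On $E$, hypothesis~\eqref{L3.1.2} gives $fg(u) - \lambda u \ge \tfrac{1}{2} fg(u)$; off $E$, the trivial bound $fg(u) - \lambda u \ge -\lambda u$ contributes the $\lambda$-integral over $(B_{R_2}^y \times (t_1,t_2)) \setminus E$. Shrinking the remaining $fg(u)$-integral to $B_{R_1}^y \times (t_1,t_2) \cap E$ using $\psi \equiv 1$ on $B_{R_1}^y$ and rearranging produces exactly the claimed inequality. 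The main technical obstacle I anticipate is justifying the limits $\int\psi(x)u(x,t)\omega_h(t-t_i)\,dxdt \to e^{\lambda t_i}\int \psi u(\cdot,t_i)\,dx$ for $i=1,2$: since this is a $\psi$-weighted Steklov convergence while $T(M)$ is defined only for unweighted averages, one has to either approximate $\psi$ from above and below by characteristic functions of balls for which the hypotheses $t_1 \in T(B_{2R}^y)$, $t_2 \in T(B_R^y)$ directly apply, or invoke the fact that such weighted Lebesgue convergence holds at almost every $t$.
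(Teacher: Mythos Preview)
Your proposal is correct and follows essentially the same strategy as the paper's proof: the paper uses the explicit radial cutoff $\varphi_0\bigl((R_2-|x|)/(R_2-R_1)\bigr)$ and builds the time cutoff by integrating the one-parameter family $\omega_h(\tau-t)$ over $\tau\in(t_1,t_2)$, but the resulting test function, the $a_\alpha$-estimate via~\eqref{1.2}, and the splitting over $E$ are identical to yours. Regarding the technical obstacle you flag, the paper resolves it by bounding the spatial cutoff by $\chi_{B_{2R}^y}$ from above and by $\chi_{B_R^y}$ from below \emph{before} passing to the limit $h\to 0$, so that only the unweighted Steklov convergences guaranteed by $t_1\in T(B_{2R}^y)$ and $t_2\in T(B_R^y)$ are required.
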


\begin{proof}
Without loss of generality, it can be assumed that $y = 0$.
Consider a non-decreasing function $\varphi_0 \in C^\infty ({\mathbb R})$ 
satisfying the conditions
$$
	\left.
		\varphi_0
	\right|_{
		(- \infty, 0]
	}
	=
	0
	\quad
	\mbox{and}
	\quad
	\left.
		\varphi_0
	\right|_{
		[1, \infty)
	}
	=
	1.
$$
Taking
$$
	\varphi (x, t)
	=
	\varphi_0
	\left(
		\frac{R_2 - |x|}{R_2 - R_1}
	\right)
	\omega_h (\tau - t)
	e^{\lambda t}
$$
as a test function in~\eqref{1.3}, we obtain
\begin{align*}
	&
	\int_0^\infty
	\int_\Omega
	\sum_{|\alpha| = m}
	(- 1)^m
	a_\alpha (x, t, u)
	\partial^\alpha
	\varphi_0
	\left(
		\frac{R_2 - |x|}{R_2 - R_1}
	\right)
	\omega_h (\tau - t)
	e^{\lambda t}
	\,
	dx
	dt
	\\
	&
	\quad
	{}
	-
	\int_0^\infty
	\int_\Omega
	u
	\varphi_0
	\left(
		\frac{R_2 - |x|}{R_2 - R_1}
	\right)
	\omega_h' (\tau - t)
	e^{\lambda t}
	\,
	dx
	dt
	+
	\lambda
	\int_0^\infty
	\int_\Omega
	u
	\varphi_0
	\left(
		\frac{R_2 - |x|}{R_2 - R_1}
	\right)
	\omega_h (\tau - t)
	e^{\lambda t}
	\,
	dx
	dt
	\\
	&
	\qquad
	{}
	\ge
	\int_0^\infty
	\int_\Omega
	f (x, t) g (u)
	\varphi_0
	\left(
		\frac{R_2 - |x|}{R_2 - R_1}
	\right)
	\omega_h (\tau - t)
	e^{\lambda t}
	\,
	dx
	dt
\end{align*}
for all $\tau \in (t_1, t_2)$ and $h \in (0, t_1)$. 
By~\eqref{1.2}, this implies that
\begin{align*}
	&
	\frac{
		C
	}{
		(R_2 - R_1)^m
	}
	\int_0^\infty
	\int_{
		B_{R_2} \setminus B_{R_1}
	}
	u^p 
	\omega_h (\tau - t)
	e^{\lambda t}
	\,
	dx	dt
	\\
	&
	\quad
	{}
	-
	\int_0^\infty
	\int_\Omega
	u
	\varphi_0
	\left(
		\frac{R_2 - |x|}{R_2 - R_1}
	\right)
	\omega_h' (\tau - t)
	e^{\lambda t}
	\,
	dx	dt
	+
	\lambda
	\int_0^\infty
	\int_\Omega
	u
	\varphi_0
	\left(
		\frac{R_2 - |x|}{R_2 - R_1}
	\right)
	\omega_h (\tau - t)
	e^{\lambda t}
	\,
	dx	dt
	\\
	&
	\qquad
	{}
	\ge
	\int_0^\infty
	\int_\Omega
	f (x, t) g (u)
	\varphi_0
	\left(
		\frac{R_2 - |x|}{R_2 - R_1}
	\right)
	\omega_h (\tau - t)
	e^{\lambda t}
	\,
	dx	dt
\end{align*}
for all $\tau \in (t_1, t_2)$ and $h \in (0, t_1)$,
where the constant $C > 0$ depends only on $A$, $m$, $n$, and $p$.
Integrating the last expression with respect to $\tau$ from $t_1$ to $t_2$, we have
\begin{align}
	&
	\frac{
		C
	}{
		(R_2 - R_1)^m
	}
	\int_0^\infty
	\int_{
		B_{R_2}	\setminus B_{R_1}
	}
	u^p 
	\sigma_h (t)
	e^{\lambda t}
	\,
	dx	dt
	\nonumber
	\\
	&
	\quad
	{}
	-
	\int_0^\infty
	\int_\Omega
	u
	\varphi_0
	\left(
		\frac{R_2 - |x|}{R_2 - R_1}
	\right)
	s_h (t)
	e^{\lambda t}
	\,
	dx	dt
	+
	\lambda
	\int_0^\infty
	\int_\Omega
	u
	\varphi_0
	\left(
		\frac{R_2 - |x|}{R_2 - R_1}
	\right)
	\sigma_h (t)
	e^{\lambda t}
	\,
	dx	dt
	\nonumber
	\\
	&
	\qquad
	{}
	\ge
	\int_0^\infty
	\int_\Omega
	f (x, t) g (u)
	\varphi_0
	\left(
		\frac{R_2 - |x|}{R_2 - R_1}
	\right)
	\sigma_h (t)
	e^{\lambda t}
	\,
	dx dt
	\label{PL3.1.1}
\end{align}
for all $h \in (0, t_1)$, where
$$
	\sigma_h (t)
	=
	\int_{t_1}^{t_2}
	\omega_h (\tau - t)
	\,
	d\tau
$$
and
$$
	s_h (t)
	=
	\int_{t_1}^{t_2}
	\omega_h' (\tau - t)
	\,
	d\tau
	=
	\omega_h (t_2 - t)
	-
	\omega_h (t_1 - t).
$$
Since
\begin{align*}
	&
	\int_0^\infty
	\int_\Omega
	u
	\varphi_0
	\left(
		\frac{R_2 - |x|}{R_2 - R_1}
	\right)
	s_h (t)
	e^{\lambda t}
	\,
	dx	dt
	=
	\int_0^\infty
	\omega_h (t_2 - t)
	\int_{
		B_{R_2}
	}
	u
	\varphi_0
	\left(
		\frac{R_2 - |x|}{R_2 - R_1}
	\right)
	e^{\lambda t}
	\,
	dx	dt
	\\
	&
	\quad
	{}
	-
	\int_0^\infty
	\omega_h (t_1 - t)
	\int_{
		B_{R_2}
	}
	u
	\varphi_0
	\left(
		\frac{R_2 - |x|}{R_2 - R_1}
	\right)
	e^{\lambda t}
	\,
	dx	dt
	\\
	&
	\qquad
	{}
	\ge
	\int_0^\infty
	\omega_h (t_2 - t)
	\int_{
		B_R
	}
	u
	e^{\lambda t}
	\,
	dx	dt
	-
	\int_0^\infty
	\omega_h (t_1 - t)
	\int_{
		B_{2 R}
	}
	u
	e^{\lambda t}
	\,
	dx	dt,
\end{align*}
inequality~\eqref{PL3.1.1} yields
\begin{align}
	&
	\frac{
		C
	}{
		(R_2 - R_1)^m
	}
	\int_0^\infty
	\int_{
		B_{R_2} \setminus B_{R_1}
	}
	u^p 
	\sigma_h (t)
	e^{\lambda t}
	\,
	dx	dt
	-
	\int_0^\infty
	\omega_h (t_2 - t)
	\int_{
		B_R
	}
	u
	e^{\lambda t}
	\,
	dx	dt
	\nonumber
	\\
	&
	\quad
	{}
	+
	\int_0^\infty
	\omega_h (t_1 - t)
	\int_{
		B_{2 R}
	}
	u
	e^{\lambda t}
	\,
	dx	dt
	+
	\lambda
	\int_0^\infty
	\int_\Omega
	u
	\varphi_0
	\left(
		\frac{R_2 - |x|}{R_2 - R_1}
	\right)
	\sigma_h (t)
	e^{\lambda t}
	\,
	dx	dt
	\nonumber
	\\
	&
	\qquad
	{}
	\ge
	\int_0^\infty
	\int_\Omega
	f (x, t) g (u)
	\varphi_0
	\left(
		\frac{R_2 - |x|}{R_2 - R_1}
	\right)
	\sigma_h (t)
	e^{\lambda t}
	\,
	dx dt
	\label{PL3.1.2}
\end{align}
for all $h \in (0, t_1)$.
It is easy to see that $0 \le \sigma_h (t) \le 1$ for all $h > 0$ and $t \in (0, \infty)$ and, moreover,
$$
	\sigma_h (t) 
	\to 
	\chi_{
		(t_1, t_2)
	}
	(t)
	\quad
	\mbox{as } h \to +0
$$
for almost all $t \in (0, \infty)$, where
$
	\chi_{
		(t_1, t_2)
	}
$
is the characteristic function of the interval $(t_1, t_2)$.
In so doing,
$$
	\int_0^\infty
	\omega_h (t_2 - t)
	\int_{
		B_R
	}
	u
	e^{\lambda t}
	\,
	dx	dt
	\to
	\int_{
		B_R
	}
	u (x, t_2)
	e^{\lambda t_2}
	\,
	dx
	\quad
	\mbox{as } h \to +0
$$
and
$$
	\int_0^\infty
	\omega_h (t_1 - t)
	\int_{
		B_{2 R}
	}
	u
	e^{\lambda t}
	\,
	dx	dt
	\to
	\int_{
		B_{2 R}
	}
	u (x, t_1)
	e^{\lambda t_1}
	\,
	dx
	\quad
	\mbox{as } h \to +0.
$$
Thus, passing in~\eqref{PL3.1.2} to the limit as $h \to +0$, 
in accordance with Lebesgue's dominated convergence theorem we obtain
\begin{align*}
	&
	\frac{
		C
	}{
		(R_2 - R_1)^m
	}
	\int_{t_1}^{t_2}
	\int_{
		B_{R_2}	\setminus B_{R_1}
	}
	u^p 
	e^{\lambda t}
	\,
	dx	dt
	-
	\int_{
		B_R
	}
	u (x, t_2)
	e^{\lambda t_2}
	\,
	dx	dt
	+
	\int_{
		B_{2 R}
	}
	u (x, t_1)
	e^{\lambda t_1}
	\,
	dx	dt
	\\
	&
	\quad
	{}
	+
	\lambda
	\int_{t_1}^{t_2}
	\int_{
		B_{R_2}
	}
	u
	\varphi_0
	\left(
		\frac{R_2 - |x|}{R_2 - R_1}
	\right)
	e^{\lambda t}
	\,
	dx	dt
	\ge
	\int_{t_1}^{t_2}
	\int_{
		B_{R_2}
	}
	f (x, t) g (u)
	\varphi_0
	\left(
		\frac{R_2 - |x|}{R_2 - R_1}
	\right)
	e^{\lambda t}
	\,
	dx dt,
\end{align*}
whence it follows that
\begin{align*}
	&
	\frac{
		C
	}{
		(R_2 - R_1)^m
	}
	\int_{t_1}^{t_2}
	\int_{
		B_{R_2} \setminus B_{R_1}
	}
	u^p 
	e^{\lambda t}
	\,
	dx	dt
	-
	\int_{
		B_R
	}
	u (x, t_2)
	e^{\lambda t_2}
	\,
	dx	dt
	+
	\int_{
		B_{2 R}
	}
	u (x, t_1)
	e^{\lambda t_1}
	\,
	dx	dt
	\\
	&
	\quad
	{}
	+
	\lambda
	\int_{
		B_{R_2} \times (t_1, t_2) \setminus E
	}
	u
	\varphi_0
	\left(
		\frac{R_2 - |x|}{R_2 - R_1}
	\right)
	e^{\lambda t}
	\,
	dx	dt
	\\
	&
	\qquad
	{}
	\ge
	\int_{
		B_{R_2} \times (t_1, t_2) \cap E
	}
	(f (x, t) g (u) - \lambda u)
	\varphi_0
	\left(
		\frac{R_2 - |x|}{R_2 - R_1}
	\right)
	e^{\lambda t}
	\,
	dx dt.
\end{align*}
Combining this with the evident inequalities
$$
	\int_{
		B_{R_2} \times (t_1, t_2) \setminus E
	}
	u
	\varphi_0
	\left(
		\frac{R_2 - |x|}{R_2 - R_1}
	\right)
	e^{\lambda t}
	\,
	dx	dt
	\le
	\int_{
		B_{R_2} \times (t_1, t_2) \setminus E
	}
	u
	e^{\lambda t}
	\,
	dx	dt
$$
and
\begin{align*}
	&
	\int_{
		B_{R_2} \times (t_1, t_2) \cap E
	}
	(f (x, t) g (u) - \lambda u)
	\varphi_0
	\left(
		\frac{R_2 - |x|}{R_2 - R_1}
	\right)
	e^{\lambda t}
	\,
	dx dt
	\\
	&
	\quad
	{}
	\ge
	\frac{1}{2}
	\int_{
		B_{R_2} \times (t_1, t_2) \cap E
	}
	f (x, t) g (u)
	\varphi_0
	\left(
		\frac{R_2 - |x|}{R_2 - R_1}
	\right)
	e^{\lambda t}
	\,
	dx dt
	\ge
	\frac{1}{2}
	\int_{
		B_{R_1} \times (t_1, t_2) \cap E
	}
	f (x, t) g (u)
	e^{\lambda t}
	\,
	dx dt,
\end{align*}
we complete the proof.
\end{proof}

\begin{Lemma}\label{L3.2}
Suppose that $\overline{B_{2 R}^y} \subset \Omega$ for some $y \in \Omega$ and $R > 0$.
Also let $\varepsilon > 0$, $\lambda > 0$, and $t_1 \in T (B_{2 R}^y)$ be real numbers 
such that
\begin{equation}
	f (x, t) g (\zeta) > 2 \lambda \zeta
	\label{L3.2.1}
\end{equation}
for almost all $(x, t) \in B_{2 R}^y \times (t_1, \infty)$ 
and for all $\zeta \in [\varepsilon, \infty)$.
Then
\begin{equation}
	\int_{B_R^y}
	u (x, t_2)
	\,
	dx
	\le
	e^{ - \lambda (t_2 - t_1)}
	\int_{
		B_{2 R}^y
	}
	u (x, t_1)
	\,
	dx
	+
	C
	\left(
		\varepsilon
		R^n
		+
		\frac{
			\varepsilon^p
			R^{n - m}
		}{
			\lambda
		}
		+
		\frac{
			R^{
				n - m / (1 - p)
			}
		}{
			\lambda^{1 / (1 - p)}
		}
	\right)
	\label{L3.1.3}
\end{equation}
for almost all real numbers $t_2 > t_1$, 
where the constant $C > 0$ depends only on $A$, $m$, $n$, and~$p$.
\end{Lemma}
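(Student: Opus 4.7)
The plan is to iterate Lemma~\ref{L3.1} over a nested sequence of radii and to combine the resulting family of inequalities via a weighted sum that telescopes away a residual produced by Young's inequality. First I would take $E = \{(x, t) \in B_{2R}^y \times (t_1, t_2) : u(x, t) \ge \varepsilon\}$; hypothesis~\eqref{L3.2.1} guarantees that this $E$ satisfies~\eqref{L3.1.2}. Choose radii $\rho_k = R(2 - 2^{-k})$, so that $\rho_0 = R$, $\rho_k \to 2R$, and $\rho_{k+1} - \rho_k = R \cdot 2^{-k-1}$. For each $k \ge 0$ apply Lemma~\ref{L3.1} with $R_1 = \rho_k$ and $R_2 = \rho_{k+1}$ (admissible since $R \le \rho_k < \rho_{k+1} < 2R$), and, for almost every $t_2 > t_1$, with $t_2 \in T(B_R^y)$. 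Using $fg \ge 2 \lambda u$ on $E$, the right side of Lemma~\ref{L3.1} dominates the quantity $\lambda \int_{B_{\rho_k} \cap E} e^{\lambda t} u \, dx\, dt =: J_k$, which I move to the left.

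Next I would control the remaining terms. The bound $u < \varepsilon$ off $E$ combined with $\lambda \int_{t_1}^{t_2} e^{\lambda t} \, dt \le e^{\lambda t_2}$ gives $\lambda \int_{B_{\rho_{k+1}} \setminus E} e^{\lambda t} u \le C \varepsilon R^n e^{\lambda t_2}$. For the annular $u^p$ integral I split according to $E$. On the portion $(B_{\rho_{k+1}} \setminus B_{\rho_k}) \setminus E$ the bound $u^p < \varepsilon^p$ combined with $|B_{\rho_{k+1}} \setminus B_{\rho_k}| \le C R^{n-1} (\rho_{k+1} - \rho_k)$ produces a contribution of order $C \varepsilon^p R^{n-m} 2^{(k+1)(m-1)} e^{\lambda t_2}/\lambda$. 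On the portion where $u \ge \varepsilon$ I apply Young's inequality in the form
\[
\frac{C u^p}{(\rho_{k+1} - \rho_k)^m} \le \theta \lambda u + \frac{C_\theta}{\lambda^{p/(1-p)} (\rho_{k+1} - \rho_k)^{m/(1-p)}}
\]
for a small parameter $\theta > 0$ to be fixed later. The constant piece yields the desired term $C_\theta \cdot 2^{(k+1)(m/(1-p)-1)} R^{n-m/(1-p)} e^{\lambda t_2}/\lambda^{1/(1-p)}$, while the linear piece equals $\theta \lambda \int_{(B_{\rho_{k+1}} \setminus B_{\rho_k}) \cap E} e^{\lambda t} u \, dx\, dt = \theta(J_{k+1} - J_k)$, an identity that is the crux of the argument.

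Setting $A = e^{\lambda t_2} \|u(\cdot, t_2)\|_{L_1(B_R^y)}$ and $F = e^{\lambda t_1} \|u(\cdot, t_1)\|_{L_1(B_{2R}^y)}$, the inequality at level $k$ reads
\[
A + (1+\theta) J_k - \theta J_{k+1} \le F + C \varepsilon R^n e^{\lambda t_2} + C_1 \frac{\varepsilon^p R^{n-m}}{\lambda} 2^{(k+1)(m-1)} e^{\lambda t_2} + C_2 \frac{R^{n - m/(1-p)}}{\lambda^{1/(1-p)}} 2^{(k+1)(m/(1-p) - 1)} e^{\lambda t_2}.
\]
Multiplying by $c_k = (\theta/(1+\theta))^k$ and summing over $k = 0, 1, \ldots, N$, the identity $(1+\theta) c_k = \theta c_{k-1}$ forces the coefficients of $J_1, \ldots, J_N$ to vanish, leaving only $(1+\theta) J_0 - \theta c_N J_{N+1}$ on the left. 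Because $u \in L_{1, \mathrm{loc}}$, the number $J_{N+1}$ stays uniformly bounded in $N$, while $c_N \to 0$ geometrically, so the boundary contribution vanishes as $N \to \infty$. The partial sums $\sum c_k \to 1 + \theta$, and the two $k$-dependent geometric series on the right converge provided $\theta$ is chosen small enough — specifically $\theta(2^{m/(1-p) - 1} - 1) < 1$, a bound that depends only on $m$ and $p$. After dropping the non-negative term $J_0$ and dividing through by $e^{\lambda t_2}$ we obtain exactly~\eqref{L3.1.3}.

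The main obstacle is precisely the Young residual: it is a genuine $u$-integral supported on the annulus $B_{\rho_{k+1}} \setminus B_{\rho_k}$, which is disjoint from the region $B_{\rho_k} \cap E$ where the good $J_k$ term lives on the left side of Lemma~\ref{L3.1}. Hence no single application of the lemma can absorb it. The resolution is the algebraic observation that this residual is exactly the increment $\theta(J_{k+1} - J_k)$ of the natural sequence of $E$-masses; a geometrically weighted telescoping sum then eliminates it, at the price of requiring $\theta$ to be small enough relative to $2^{m/(1-p)}$.
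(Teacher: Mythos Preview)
Your argument is correct but follows a genuinely different route from the paper. The paper works with the quantity $J(r)=\int_{t_1}^{t_2}\int_{B_r}e^{\lambda t}u^p\,dx\,dt$ and builds a \emph{finite} sequence of radii $r_1<\dots<r_l$ in $[3R/2,2R]$ by a doubling stopping rule ($r_{i+1}$ is the largest radius with $J(r)\le 2J(r_i)$); it then splits into several cases (whether the first three terms of Lemma~\ref{L3.1} are nonnegative, and whether the $\setminus E$ part of $\int u^p$ dominates), uses H\"older rather than Young to pass from $\int u^p$ to $\int u$, and extracts a bound on $J(3R/2)$ which is plugged into a final application of Lemma~\ref{L3.1} with $R_1=R$, $R_2=3R/2$. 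Your proof instead fixes the dyadic ladder $\rho_k=R(2-2^{-k})$ once and for all, linearizes $u^p$ via Young so that the residual is exactly $\theta(J_{k+1}-J_k)$ for $J_k=\lambda\int_{(B_{\rho_k}\times(t_1,t_2))\cap E}e^{\lambda t}u$, and kills that residual by the weighted telescoping $\sum_k(\theta/(1+\theta))^k$; the smallness condition $(\theta/(1+\theta))\,2^{m/(1-p)-1}<1$ indeed controls both geometric series (it is stricter than the one coming from the $\varepsilon^p$ term since $m/(1-p)>m$). Your approach is shorter and case-free, essentially a hole-filling iteration; the paper's doubling argument is closer to a De~Giorgi-type scheme and is a bit more robust when the growth of $J(r)$ is not known a priori, but for the present lemma your method is at least as clean.
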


\begin{proof}
Let us denote by $C$ various positive constants that can depend only on $A$, $m$, $n$, and~$p$.
Without loss of generality, we can limit ourselves to the case of $y = 0$. In this case, estimate~\eqref{L3.1.3} takes the form
\begin{equation}
	\int_{B_R}
	u (x, t_2)
	\,
	dx
	\le
	e^{ - \lambda (t_2 - t_1)}
	\int_{
		B_{2 R}
	}
	u (x, t_1)
	\,
	dx
	+
	C
	\left(
		\varepsilon
		R^n
		+
		\frac{
			\varepsilon^p
			R^{n - m}
		}{
			\lambda
		}
		+
		\frac{
			R^{
				n - m / (1 - p)
			}
		}{
			\lambda^{1 / (1 - p)}
		}
	\right)
	\label{PT2.1.2}
\end{equation}
for almost all $t_2 > t_1$.
It can also be assumed that 
\begin{equation}
	\| u \|_{
		L_1 (B_{3 R / 2} \times (t_1, t_2))
	}
	>
	0;
	\label{PT2.1.1}
\end{equation}
for all $t_2 > t_1$; otherwise we replace $t_1$ by 
$$
	t_* 
	= 
	\sup 
	\{ 
		t > t_1 
		: 
		\| u \|_{
			L_1 (B_{3 R / 2} \times (t_1, t))
		}
		=
		0
	\}.
$$
It is clear that the left-hand side of~\eqref{PT2.1.2} is equal to zero
for almost all $t_1 < t_2 < t_*$.
In so doing, if $t_* = \infty$, then~\eqref{PT2.1.2} is obvious 
as the left-hand side of this inequality is equal to zero for almost all $t_2 > t_1$.

Let $t_2 \in T(B_R)$ be a real number with $t_2 > t_1$.
We denote
$$
	J (r)
	=
	\int_{t_1}^{t_2}
	\int_{
		B_r
	}
	e^{\lambda t}
	u^p
	\,
	dx dt,
	\quad
	0 < r \le 2 R.
$$
From~\eqref{PT2.1.1}, it follows that $J (r) > 0$ for all $r \ge 3 R / 2$.
Let us construct a finite sequence of real numbers $r_1 < r_2 < \ldots < r_l$ by induction.
Take $r_1 = 3 R / 2$.
Now, assume that $r_i$ is already known. 
In the case of $r_i \ge 7 R / 4$, we put $l = i$ and stop; otherwise we take
$$
	r_{i + 1} 
	= 
	\sup
	\{
		r \in (r_i, 2 r_i) \cap [3 R / 2, 2 R]
		:
		J (r) \le 2 J (r_i)
	\}.
$$
Further, let
$$
	E 
	= 
	\{ 
		(x, t) \in B_{2 R} \times (t_1, t_2)
		:
		u (x, t) \ge \varepsilon
	\}.
$$
As condition~\eqref{L3.2.1} implies~\eqref{L3.1.2}, using Lemma~\ref{L3.1}, we have
\begin{align}
	&
	\int_{
		B_{2 R}
	}
	e^{\lambda t_1}
	u (x, t_1)
	\,
	dx
	-
	\int_{B_R}
	e^{\lambda t_2}
	u (x, t_2)
	\,
	dx
	+
	\lambda
	\int_{
		B_{r_{i + 1}} \times (t_1, t_2)
		\setminus
		E
	}
	e^{\lambda t}
	u
	\,
	dx dt
	\nonumber
	\\
	&
	\qquad
	{}
	+
	\frac{
		C
	}{
		(r_{i + 1} - r_i)^m
	}
	\int_{t_1}^{t_2}
	\int_{
		B_{r_{i + 1}} \setminus B_{r_i}
	}
	e^{\lambda t}
	u^p
	\,
	dx	dt
	\ge
	\frac{1}{2}
	\int_{
		B_{r_i} \times (t_1, t_2)
		\cap
		E
	}
	e^{\lambda t}
	f (x, t) g (u)
	\,
	dx dt
	\label{PT2.1.3}
\end{align}
for all $i = 1, 2, \ldots, l - 1$.
If
$$
	\int_{
		B_{2 R}
	}
	e^{\lambda t_1}
	u (x, t_1)
	\,
	dx
	-
	\int_{B_R}
	e^{\lambda t_2}
	u (x, t_2)
	\,
	dx
	+
	\lambda 
	\int_{
		B_{r_{i + 1}} \times (t_1, t_2)
		\setminus
		E
	}
	e^{\lambda t}
	u
	\,
	dx dt
	\ge
	0
$$
for some $1 \le i \le l - 1$, then 
$$
	\int_{B_R}
	e^{\lambda t_2}
	u (x, t_2)
	\,
	dx
	\le
	\int_{
		B_{2 R}
	}
	e^{\lambda t_1}
	u (x, t_1)
	\,
	dx
	+
	\lambda 
	\int_{
		B_{r_{i + 1}} \times (t_1, t_2)
		\setminus
		E
	}
	e^{\lambda t}
	u
	\,
	dx dt
	\le
	\int_{
		B_{2 R}
	}
	e^{\lambda t_1}
	u (x, t_1)
	\,
	dx
	+
	C
	\varepsilon
	e^{\lambda t_2}
	R^n,
$$
whence~\eqref{PT2.1.2} follows at once; therefore, it can be assumed that
$$
	\int_{
		B_{2 R}
	}
	e^{\lambda t_1}
	u (x, t_1)
	\,
	dx
	-
	\int_{B_R}
	e^{\lambda t_2}
	u (x, t_2)
	\,
	dx
	+
	\lambda 
	\int_{
		B_{r_{i + 1}} \times (t_1, t_2)
		\setminus
		E
	}
	e^{\lambda t}
	u
	\,
	dx dt
	<
	0
$$
for all $i = 1, 2, \ldots, l - 1$. By~\eqref{PT2.1.3}, this implies the inequality
\begin{equation}
	\int_{t_1}^{t_2}
	\int_{
		B_{r_{i + 1}} \setminus B_{r_i}
	}
	e^{\lambda t}
	u^p
	\,
	dx	dt
	\ge
	C
	(r_{i + 1} - r_i)^m
	\int_{
		B_{r_i} \times (t_1, t_2)
		\cap
		E
	}
	e^{\lambda t}
	f (x, t) g (u)
	\,
	dx dt
	\label{PT2.1.4}
\end{equation}
for all $i = 1, 2, \ldots, l - 1$.
From~~\eqref{L3.2.1}, it follows that
$$
	\int_{
		B_{r_i} \times (t_1, t_2)
		\cap
		E
	}
	e^{\lambda t}
	f (x, t) g (u)
	\,
	dx dt
	\ge
	2 
	\lambda
	\int_{
		B_{r_i} \times (t_1, t_2)
		\cap
		E
	}
	e^{\lambda t}
	u
	\,
	dx dt.
$$
Combining this with~\eqref{PT2.1.4}, we obtain
\begin{equation}
	\int_{t_1}^{t_2}
	\int_{
		B_{r_{i + 1}} \setminus B_{r_i}
	}
	e^{\lambda t}
	u^p
	\,
	dx	dt
	\ge
	C 
	\lambda
	(r_{i + 1} - r_i)^m
	\int_{
		B_{r_i} \times (t_1, t_2)
		\cap
		E
	}
	e^{\lambda t}
	u
	\,
	dx dt
	\label{PT2.1.5}
\end{equation}
for all $i = 1, 2, \ldots, l - 1$.
By the H\"older inequality,
\begin{align*}
	\int_{
		B_{r_i} \times (t_1, t_2)
		\cap
		E
	}
	e^{\lambda t}
	u^p
	\,
	dx dt
	&
	\le
	\left(
		\int_{
			B_{r_i} \times (t_1, t_2)
			\cap
			E
		}
		e^{\lambda t}
		\,
		dx dt
	\right)^{1 - p}
	\left(
		\int_{
			B_{r_i} \times (t_1, t_2)
			\cap
			E
		}
		e^{\lambda t}
		u
		\,
		dx dt
	\right)^p
	\\
	&
	\le
	\frac{
		C
		e^{(1 - p) \lambda t_2}
		R^{(1 - p) n}
	}{
		\lambda^{1 - p}
	}
	\left(
		\int_{
			B_{r_i} \times (t_1, t_2)
			\cap
			E
		}
		e^{\lambda t}
		u
		\,
		dx dt
	\right)^p;
\end{align*}
therefore,~\eqref{PT2.1.5} allows us to assert that
\begin{equation}
	\int_{t_1}^{t_2}
	\int_{
		B_{r_{i + 1}} \setminus B_{r_i}
	}
	e^{\lambda t}
	u^p
	\,
	dx	dt
	\ge
	\frac{
		C 
		\lambda^{1 / p}
		(r_{i + 1} - r_i)^m
	}{
		\left(
			e^{\lambda t_2} 
			R^n
		\right)^{(1 - p) / p}
	}
	\left(
		\int_{
			B_{r_i} \times (t_1, t_2)
			\cap
			E
		}
		e^{\lambda t}
		u^p
		\,
		dx dt
	\right)^{1 / p}
	\label{PT2.1.6}
\end{equation}
for all $i = 1, 2, \ldots, l - 1$.
At first, let
\begin{equation}
	\int_{
		B_{r_i} \times (t_1, t_2)
		\setminus
		E
	}
	e^{\lambda t}
	u^p
	\,
	dx dt
	\ge
	\frac{1}{2}
	\int_{t_1}^{t_2}
	\int_{
		B_{r_i}
	}
	e^{\lambda t}
	u^p
	\,
	dx	dt
	\label{PT2.1.13}
\end{equation}
for some $i = 1, 2, \ldots, l - 1$. 
Applying Lemma~\ref{L3.1} with $R_1 = R$ and $R_2 = 3 R / 2$, we arrive at the estimate
\begin{align}
	\int_{B_R}
	e^{\lambda t_2}
	u (x, t_2)
	\,
	dx
	\le
	{}
	&
	\int_{
		B_{2 R}
	}
	e^{\lambda t_1}
	u (x, t_1)
	\,
	dx
	+
	\lambda 
	\int_{
		B_{3 R / 2} \times (t_1, t_2)
		\setminus
		E
	}
	e^{\lambda t}
	u
	\,
	dx dt
	\nonumber
	\\
	&
	{}
	+
	\frac{
		C
	}{
		R^m
	}
	\int_{
		(B_{3 R / 2}	\setminus B_R) \times (t_1, t_2)
	}
	e^{\lambda t}
	u^p
	\,
	dx	dt.
	\label{PT2.1.8}
\end{align}
Since
$$
	\int_{
		(B_{3 R / 2}	\setminus B_R) \times (t_1, t_2)
	}
	e^{\lambda t}
	u^p
	\,
	dx	dt
	\le
	\int_{t_1}^{t_2}
	\int_{
		B_{r_i}
	}
	e^{\lambda t}
	u^p
	\,
	dx	dt
	\le
	2
	\int_{
		B_{r_i} \times (t_1, t_2)
		\setminus
		E
	}
	e^{\lambda t}
	u^p
	\,
	dx dt,
$$
this yields
\begin{align*}
	&
	\int_{B_R}
	e^{\lambda t_2}
	u (x, t_2)
	\,
	dx
	\le
	\int_{
		B_{2 R}
	}
	e^{\lambda t_1}
	u (x, t_1)
	\,
	dx
	+
	\lambda 
	\int_{
		B_{3 R / 2} \times (t_1, t_2)
		\setminus
		E
	}
	e^{\lambda t}
	u
	\,
	dx dt
	+
	\frac{
		C
	}{
		R^m
	}
	\int_{
		B_{r_i} \times (t_1, t_2)
		\setminus
		E
	}
	e^{\lambda t}
	u^p
	\,
	dx dt
	\\
	&
	\quad
	{}
	\le
	\int_{
		B_{2 R}
	}
	e^{\lambda t_1}
	u (x, t_1)
	\,
	dx
	+
	C
	e^{\lambda t_2}
	\left(
		\varepsilon
		R^n
		+
		\frac{
			\varepsilon^p
			R^{n - m}
		}{
			\lambda
		}
	\right),
\end{align*}
whence~\eqref{PT2.1.2} follows.
Now, assume that for any $i = 1, 2, \ldots, l - 1$ relation~\eqref{PT2.1.13} is not satisfied.
In this case, we obviously have
$$
	\int_{
		B_{r_i} \times (t_1, t_2)
		\cap
		E
	}
	e^{\lambda t}
	u^p
	\,
	dx dt
	\ge
	\frac{1}{2}
	\int_{t_1}^{t_2}
	\int_{
		B_{r_i}
	}
	e^{\lambda t}
	u^p
	\,
	dx	dt
$$
for all $i = 1, 2, \ldots, l - 1$. 
Combining the last expression with~\eqref{PT2.1.6}, one can conclude that
$$
	\int_{t_1}^{t_2}
	\int_{
		B_{r_{i + 1}} \setminus B_{r_i}
	}
	e^{\lambda t}
	u^p
	\,
	dx	dt
	\ge
	\frac{
		C 
		\lambda^{1 / p}
		(r_{i + 1} - r_i)^m
	}{
		\left(
			e^{\lambda t_2} 
			R^n
		\right)^{(1 - p) / p}
	}
	\left(
		\int_{t_1}^{t_2}
		\int_{
			B_{r_i}
		}
		e^{\lambda t}
		u^p
		\,
		dx	dt
	\right)^{1 / p}
$$
or, in other words,
\begin{equation}
	J (r_{i+1}) - J (r_i)
	\ge
	\frac{
		C 
		\lambda^{1 / p}
		(r_{i + 1} - r_i)^m
	}{
		\left(
			e^{\lambda t_2} 
			R^n
		\right)^{(1 - p) / p}
	}
	J^{1 / p} (r_i)
	\label{PT2.1.9}
\end{equation}
for all $i = 1, 2, \ldots, l - 1$.

If $r_l = 2 R$, then $r_l - r_{l - 1} \ge R / 4$.
Hence,~\eqref{PT2.1.9} implies the inequality
$$
	\frac{
		J (r_l) - J (r_{l - 1})
	}{
		J^{1 / p} (r_{l - 1})
	}
	\ge
	\frac{
		C 
		\lambda^{1 / p}
		R^{
			m - n (1 - p) / p
		}
	}{
		e^{\lambda t_2 (1 - p) / p}
	}.
$$
As $2 J (r_{l - 1}) \ge J (r_l)$, this yields
$$
	J^{1 - 1 / p} (r_{l - 1})
	\ge
	\frac{
		C 
		\lambda^{1 / p}
		R^{
			m - n (1 - p) / p
		}
	}{
		e^{\lambda t_2 (1 - p) / p}
	},
$$
whence it follows that
$$
	\int_{
		(B_{3 R / 2} \setminus B_R) \times (t_1, t_2)
	}
	e^{\lambda t}
	u^p
	\,
	dx dt
	\le
	J (r_{l-1})
	\le
	\frac{
		C
		e^{\lambda t_2} 
		R^{
			n - m p / (1 - p)
		}
	}{
		\lambda^{1 / (1 - p)}
	};
$$
therefore, taking into account~\eqref{PT2.1.8}, we arrive at the estimate
\begin{equation}
	\int_{B_R}
	e^{\lambda t_2}
	u (x, t_2)
	\,
	dx
	\le
	\int_{
		B_{2 R}
	}
	e^{\lambda t_1}
	u (x, t_1)
	\,
	dx
	+
	C
	e^{\lambda t_2}
	\left(
		\varepsilon
		R^n
		+
		\frac{
			R^{
				n - m / (1 - p)
			}
		}{
			\lambda^{1 / (1 - p)}
		}
	\right)
	\label{PT2.1.10}
\end{equation}
which in turn leads us to~\eqref{PT2.1.2}.

Now, let $r_l < 2 R$. In this case, we have $J (r_{i+1}) = 2 J (r_i)$ for all $i = 1, 2, \ldots, l - 1$.
Consequently,~\eqref{PT2.1.9} implies that
$$
	\frac
	{
		1
	}{
		J^{
			(1 - p) / (p m)
		}
		(r_i)
	}
	=
	\left(
		\frac{
			J (r_{i+1}) - J (r_i)
		}{
			J^{1 / p} (r_i)
		}
	\right)^{1 / m}
	\ge
	\frac{
		C 
		\lambda^{1 / (p m)}
		(r_{i + 1} - r_i)
	}{
		\left(
			e^{\lambda t_2} 
			R^n
		\right)^{(1 - p) / (p m)}
	},
	\quad
	i = 1, 2, \ldots, l - 1,
$$
whence we obtain
$$
	\sum_{i=1}^{l-1}
	\frac
	{
		1
	}{
		J^{
			(1 - p) / (p m)
		}
		(r_i)
	}
	\ge
	\frac{
		C 
		\lambda^{1 / (p m)}
		(r_l - r_1)
	}{
		\left(
			e^{\lambda t_2} 
			R^n
		\right)^{(1 - p) / (p m)}
	}.
$$
Since $r_l - r_1 \ge R / 4$ and, moreover, $J (r_i) = 2^{i - 1} J (3 R / 2)$ for all $i = 1, 2, \ldots, l - 1$, 
this yields
$$
	J (3 R / 2)
	\le
	\frac{
		C
		e^{\lambda t_2} 
		R^{
			n - m p / (1 - p)
		}
	}{
		\lambda^{1 / (1 - p)}
	}.
$$
Combining the last estimate with~\eqref{PT2.1.8}, we again arrive at~\eqref{PT2.1.10} 
and therefore at~\eqref{PT2.1.2}.
\end{proof}

\begin{proof}[Proof of Theorem~\ref{T2.1}]
As previously, by $C$ we mean various positive constants which can 
depend only on $A$, $m$, $n$, and~$p$.
Consider a finite cover of the compact set $K$ by open balls $B_{R_i}^{y_i}$, $i = 1,2,\ldots,N$,
such that
$$
	\bigcup_{i = 1}^N
	\overline{B_{2 R_i}^{y_i}}
	\subset 
	\Omega.
$$
Let $\varepsilon > 0$ and $\lambda > 0$ be some real numbers.
Since $g$ is a positive continuous function on the interval $[\varepsilon, \infty)$ 
satisfying condition~\eqref{T2.1.1}, we have
\begin{equation}
	g (\zeta) \ge \delta \zeta
	\label{PT2.1.11}
\end{equation}
with some constant $\delta > 0$ for all $\zeta \in [\varepsilon, \infty)$.
In view of~\eqref{T2.1.2}, there exists $t_1 \in \bigcap_{i=1}^N T (B_{2 R_i}^{y_i})$ such that
\begin{equation}
	\delta
	\mathop{\rm ess\,inf}_{
		\Omega \times (t_1, \infty)
	}
	f 
	>
	2
	\lambda.
	\label{PT2.1.12}
\end{equation}
It is easy to see
that conditions~\eqref{PT2.1.11} and~\eqref{PT2.1.12}
guarantee the validity of inequality~\eqref{L3.2.1}
for almost all $(x, t) \in \Omega \times (t_1, \infty)$ 
and for all $\zeta \in [\varepsilon, \infty)$.
Thus, taking into account Lemma~\ref{L3.2}, we obtain
$$
	\int_{B_{R_i}^{y_i}}
	u (x, t_2)
	\,
	dx
	\le
	e^{ - \lambda (t_2 - t_1)}
	\int_{
		B_{2 R_i}^{y_i}
	}
	u (x, t_1)
	\,
	dx
	+
	C
	\left(
		\varepsilon
		R_i^n
		+
		\frac{
			\varepsilon^p
			R_i^{n - m}
		}{
			\lambda
		}
		+
		\frac{
			R_i^{
				n - m / (1 - p)
			}
		}{
			\lambda^{1 / (1 - p)}
		}
	\right),
	\quad
	i = 1,2,\ldots,N,
$$
for almost all real numbers $t_2 > t_1$, whence it follows that
$$
	\int_K
	u (x, t_2)
	\,
	dx
	\le
	e^{ - \lambda (t_2 - t_1)}
	\sum_{i=1}^N
	\int_{
		B_{2 R_i}^{y_i}
	}
	u (x, t_1)
	\,
	dx
	+
	C
	\sum_{i=1}^N
	\left(
		\varepsilon
		R_i^n
		+
		\frac{
			\varepsilon^p
			R_i^{n - m}
		}{
			\lambda
		}
		+
		\frac{
			R_i^{
				n - m / (1 - p)
			}
		}{
			\lambda^{1 / (1 - p)}
		}
	\right)
$$
for almost all real numbers $t_2 > t_1$.
Passing to the limit in the last estimate, we have
\begin{equation}
	\limsup_{t_2 \to \infty}
	\int_K
	u (x, t_2)
	\,
	dx
	\le
	C
	\sum_{i=1}^N
	\left(
		\varepsilon
		R_i^n
		+
		\frac{
			\varepsilon^p
			R_i^{n - m}
		}{
			\lambda
		}
		+
		\frac{
			R_i^{
				n - m / (1 - p)
			}
		}{
			\lambda^{1 / (1 - p)}
		}
	\right).
	\label{PT2.1.14}
\end{equation}
As we can take $\varepsilon > 0$ arbitrarily small and $\lambda > 0$ arbitrarily large,
this completes the proof.
\end{proof}

\begin{proof}[Proof of Theorem~\ref{T2.2}]
Let $B_{R_i}^{y_i}$, $i = 1,2,\ldots,N$, be a finite cover of the compact set $K$
given in the proof of Theorem~\ref{T2.1}.
Since $g$ is a positive continuous function on the interval $[1, \infty)$ 
satisfying condition~\eqref{T2.1.1}, there exists a real number $\delta > 0$ 
such that inequality~\eqref{PT2.1.11} is valid for all $\zeta \in [1, \infty)$.
Take $t_1 \in \bigcap_{i=1}^N T (B_{2 R_i}^{y_i})$ satisfying the condition
$$
	\mathop{\rm ess\,inf}_{
		B_\Omega \times (t_1, \infty)
	}
	f 
	\ge
	\frac{\gamma}{2}.
$$
The last inequality immediately implies~\eqref{PT2.1.12}, where
$$
	\lambda = \frac{\delta \gamma}{4}.
$$
Thus, repeating the arguments given in the proof of~\eqref{PT2.1.14} with $\varepsilon = 1$, 
we obtain
$$
	\limsup_{t_2 \to \infty}
	\int_K
	u (x, t_2)
	\,
	dx
	\le
	C
	\sum_{i=1}^N
	\left(
		R_i^n
		+
		\frac{
			R_i^{n - m}
		}{
			\lambda
		}
		+
		\frac{
			R_i^{
				n - m / (1 - p)
			}
		}{
			\lambda^{1 / (1 - p)}
		}
	\right),
$$
where the constant $C > 0$ depends only on $A$, $m$, $n$, and~$p$.
\end{proof}

\begin{center} \bf Acknowledgments \end{center}

The work is supported by RUDN University, Project 5-100. 
The work of the first author is also supported by RSF, grant 20-11-20272.


\begin{thebibliography}{100}
\bibitem{BS}
Y.~Belaud, A.E.~Shishkov,
Long time extinction of solutions of some semilinear parabolic equations,
J. Diff. Eq. 238 (2007) 64--86.

\bibitem{D2015}
V.N.~Denisov,
The stabilization rate of a solution to the Cauchy problem for parabolic equation with lower order coefficients,
J. Math. Sci. 208 (2015) 91--99.

\bibitem{DM2016}
V.N.~Denisov, A.A.~Martynova,
Necessary conditions for stabilization of solutions to the Dirichlet problem for divergence parabolic equations,
J. Math. Sci. 216 (2016)  236--242.

\bibitem{F}
A.~Fridman,
Convergence of solutions of parabolic equations to a steady state,
J. Math. Mech. 8(1959) 57--76.

\bibitem{GP}
V.A.~Galaktionov, L.A.~Peletier,
Asymptotic behavior near finite-time extinction for the fast diffusion equation,
Arch. Rational Mech. Anal. 139 (1997) 83--98.

\bibitem{GV}
V.A.~Galaktionov, J.L.~Vazquez,
Necessary and sufficient conditions of complete blow-up and
extinction for one-dimensional quasilinear heat equations,
Arch. Rational Mech. Anal. 129 (1996) 225--244.

\bibitem{GG}
A.~Gladkov, M.~Guedda, 
Diffusion-absorption equation without growth restrictions on the data, 
J. Math. Anal. Appl. 274 (2002) 16--37.

\bibitem{G}
A.L.~Gladkov, 
Behaviour of solutions of certain quasilinear parabolic equations with power-type non-linearities, Sb. Math. 191 (2000) 341--358.

\bibitem{GmV}
A.~Gmira, L.~Veron,
Large time behaviour of the solutions of a semilinear parabolic equation in ${\mathbb R}^n$,
J. Diff. Eq. 53 (1984) 258--276.

\bibitem{K}
V.A.~Kondratiev,
Asymptotic properties of solutions of the nonlinear heat equation,
Diff. Uravneniya 34 (1998) 250--259.

\bibitem{KV}
V.A.~Kondratiev, L.~Veron, 
Asymptotic behavior of solutions of some nonlinear parabolic or elliptic equations, 
Asymptotic Analysis 14 (1997) 117--156

\bibitem{mePRSE}
A.A.~Kon'kov,
On the asymptotic behaviour of solutions of nonlinear parabolic equations, 
Proc. Royal Soc. Edinburgh 136 (2006) 365--384.

\bibitem{Oddson}
J.K.~Oddson,
The rate of decay of solutions of parabolic differential equations,
Pacific J. Math. 29 (1969) 389--396.

\bibitem{RS}
N. Belhaj Rhouma, M. Seddik,
Liouville type results for a class of quasilinear parabolic problems,
J. Math. Ph. 60 101501 (2019).

\bibitem{CV}
E.~Chasseigne, J.L.~Vazquez, 
Theory of extended solutions for fast-diffusion equations in optimal classes of data.
Radiation from singularities,
Arch. Ration. Mech. Anal. 164 (2002) 133--187.

\bibitem{VV}
J.L.~Vazquez, L.~Veron,
Different kinds of singular solutions of nonlinear parabolic equations.
In: Nonlinear Problems in Applied Mathematics, SIAM, Philadelphia, PA, 1996, 240--249.

\bibitem{MS}
M.~Marcus, A.~Shishkov, 
Propagation of strong singularities in semilinear parabolic equations with degenerate absorption,
Ann. Sc. Norm. Super Pisa, Cl. di Sc. Ser. V, vol. XIV, no. 3 (2016).

\bibitem{SVRLMA}
A.E.~Shishkov, L.~Veron,
The balance between diffusion and absorption in semilinear parabolic equations,
Rend. Lincei Mat. Appl. 18 (2007) 59--96.

\bibitem{B}
F.~Bernis,
Qualitative properties for some nonlinear higher order degenerate parabolic equations,
Houston J. Math. 14 (1988) 319-352.

\bibitem{KS}
A.A.~Kon'kov, A.E.~Shishkov,
On stabilization of solutions of higher order evolution inequalities,
Asymptotic Analysis 115 (2019) 1--17.

\end{thebibliography}
\end{document}